\newcommand{\ourparagraph}[1]{\medskip \noindent\textit{#1}.}
\pgfplotsset{compat=1.9}
\newcommand{\eg}{e.g.,\xspace}
\pgfplotsset{
	/pgf/number format/textnumber/.style={
		fixed,
		fixed zerofill,
		precision=2,
		1000 sep={.},
	},
}
\newcommand{\ApplyColor}[1]{\ifdim #1pt<0.95pt \textcolor{blue}{#1}\else
	\ifdim #1pt>1.05pt \textcolor{red}{#1}\else #1 \fi
	\fi
}
\newcolumntype{R}{>{\collectcell\ApplyColor}{r}<{\endcollectcell}}
\newcommand{\R}{\mathbb{R}\xspace}
\newcommand{\Bins}{V\xspace}
\newcommand{\NetFlow}{f}
\newcommand{\Coherence}{g}
\newcommand{\Cluster}{\mathcal{K}\xspace}
\newcommand{\clustind}{\kappa}
\newcommand{\Edges}{E\xspace}
\newcommand{\Trans}{A\xspace}
\newcommand{\coh}{\Coherence\xspace}
\newcommand{\flow}{\NetFlow\xspace}
\newcommand{\percentage}[2]{\pgfmathparse{(1 - #1/#2) *100}\pgfmathprintnumber[precision=1]{\pgfmathresult}\%}
\newcommand{\reduction}[2]{\pgfmathparse{(#1)/#2 }{\pgfmathprintnumber[textnumber]{\pgfmathresult}}}
\newcommand{\colorquot}[2]{
	\pgfmathparse{(#1<0.9*#2)?1:0}\ifdim\pgfmathresult pt>0pt \textcolor{blue}{$\mathbf{\reduction{#1}{#2}}$}\else
	\pgfmathparse{(#1>1.1*#2)?1:0}\ifdim\pgfmathresult pt>0pt \textcolor{red}{$\mathbf{\reduction{#1}{#2}}$} \else 
	$\reduction{#1}{#2}$\fi \fi
}
\DeclareMathOperator{\conv}{conv}
\newcolumntype{$}{>{\global\let\currentrowstyle\relax}}
\newcolumntype{^}{>{\currentrowstyle}}
\begin{document}
\title{Branch and Cut for Partitioning a Graph into a Cycle of Clusters}
\author{Leon Eifler\inst{1}\orcidID{0000-0003-0245-9344} \and
Jakob Witzig\inst{1,2}\orcidID{0000-0003-2698-0767} \and
Ambros Gleixner\inst{1,3}\orcidID{0000-0003-0391-5903}}
\authorrunning{L. Eifler, J. Witzig, A. Gleixner}
\institute{Zuse Institute Berlin, Takustr. 7, 14195 Berlin, Germany \email{eifler@zib.de} \and
SAP SE, Dietmar-Hopp-Allee 17, 69190 Walldorf, Germany \email{jakob.witzig@sap.com} \and
HTW Berlin, 10313 Berlin, Germany \email{gleixner@htw-berlin.de}}
\maketitle              \begin{abstract}
  In this paper we study formulations and algorithms for the cycle clustering problem, a
  partitioning problem over the vertex set of a directed graph with nonnegative arc weights
  that is used to identify cyclic behavior in simulation data generated from
  nonreversible Markov state models.
Here, in addition to partitioning the vertices into a set of coherent clusters, the
  resulting clusters must be ordered into a cycle such as to maximize the total net flow
  in the forward direction of the cycle.
We provide a problem-specific binary programming formulation and compare it to a
  formulation based on the reformulation-linearization technique (RLT).
We present theoretical results on the polytope associated with our custom formulation and
  develop primal heuristics and separation routines for both formulations.
In computational experiments on simulation data from biology we find that branch and cut
  based on the problem-specific formulation outperforms the one based on RLT.

\keywords{Branch and cut \and cycle clustering \and graph partitioning \and valid inequalities \and primal heuristics.}
\end{abstract}
\section{Introduction}

This paper is concerned with solving a curious combination of a graph partitioning problem
with a cyclic ordering problem called \emph{cycle clustering}.
This combinatorial optimization problem was introduced in~\cite{WitzigEtal2018}
to analyze simulation data from nonreversible stochastic processes, in particular
Markov state models, where standard methods using spectral information are not easily
applicable.
One typical example for such processes are catalytic cycles in biochemistry, where a set
of chemical reactions transforms educts to products in the presence of a catalyst.

Generally, we are given a complete undirected graph $G=(\Bins, \Edges)$ with vertex set
$\Bins =\{1,\ldots, n\}$, edge set $\Edges = \binom{n}{2}$,
weights~$q_{i,j} \ge 0$ for all~$i,j\in\Bins$,
and a pre-specified number of clusters~$m\in\mathbb{N}$.
It is essential to the problem that $q$ is not assumed to be symmetric, i.e., we may have
$q_{i,j}\not=q_{j,i}$.
We refer to directed edges $(i,j)$ also as arcs and let $A=\{(i,j) \in \Bins\times\Bins :
i\not=j\}$.
Our goal is then to partition $\Bins$ into an ordered set of clusters
$C_1,\dots, C_m$ such as to maximize an objective function based
on the following definition.
\begin{definition}
  Let $Q\in\R_{\geq 0}^{n\times n}$ and $S,T \subseteq \Bins=\{1,\ldots, n\}$ be two
  disjoint sets of vertices, then we define the \emph{net flow} from $S$ to $T$ as
\begin{align*}
    \flow(S,T) \coloneqq \sum_{i \in S,j\in T} q^-_{i,j}
    &\,\text{ with }\, q^-_{i,j} \coloneqq q_{i,j} - q_{j,i},
    \intertext{and the \emph{coherence} of $S$ as}
    \coh(S) \coloneqq \sum_{i,j \in S} q_{i,j} = \sum_{i,j \in S, i\leq j} q^+_{i,j}
    &\,\text{ with }\, q^+_{i,j} \coloneqq \begin{cases}
      q_{i,i}, & \text{if } i=j, \\
      q_{i,j} + q_{j,i}, & \text{otherwise}.
    \end{cases}
  \end{align*}
\end{definition}
Given a scaling parameter $\alpha$ with $0 < \alpha < 1$, we want to compute a
partitioning $C_1,\dots, C_m$ of $\Bins$ that maximizes the combined objective function
\begin{align}
  \label{eq:ccobj}
  \alpha \sum_{t=1}^m \flow(C_{t},C_{t+1}) + (1-\alpha) \sum_{t=1}^m \coh(C_t),
\end{align}
where we use the cyclic notation $C_{m+1} = C_{1}, C_{m+2} = C_2, \ldots,$ when indexing
clusters in order to improve readability.
W.l.o.g. we assume in the following that $q_{i,i}=0$ for all~$i\in\Bins$, since nonzero values only add a
constant offset to~\eqref{eq:ccobj}.

In the application domain, the vertices may correspond to Markov states and $Q =
(q_{i,j})_{i,j\in\Bins}$ then constitutes the transition matrix of the unconditional
probabilities that a transition from state~$i$ to state~$j$ is observed during the
simulation.
In this case, the coherence of a set of states gives the probability that a transition takes
place inside this set, hence the second component of~\eqref{eq:ccobj} aims to separate
states that rarely interact with each other and cluster states into so-called
metastabilities.
The (forward) flow $\sum_{i \in S,j\in T} q_{i,j}$ between two disjoint sets of states~$S$
and $T$ corresponds to the probability of observing a transition from $S$ to $T$.
Hence, the first component of~\eqref{eq:ccobj} aims at arranging clusters into a cycle
such that the probability of observing a transition from one cluster to the next minus the
probability of observing a transition in the backwards direction is maximized.
With $\alpha$ close to one, finding near-optimal cycle clusterings can be used to detect
whether there exists such cyclic behavior in simulation data.
For details on the motivation and interpretation of cycle clustering, we refer
to~\cite{WitzigEtal2018}.

Since the solution of cycle clustering problems to global optimality has proven to be
challenging, we have developed a problem-specific branch-and-cut algorithm that is the
focus of this paper.
Our contributions are as follows.
In Sec.~\ref{sec:formulations}, we first present two binary programming formulations for cycle clustering and give basic results on the underlying polytopes.
In Sec.~\ref{sec:inequalities}, we provide the basis for a branch-and-cut algorithm by
introducing three classes of valid inequalities and discuss suitable separation algorithms
for them;
we show that the computationally most useful class of extended subtour and path
inequalities can be separated in polynomial time.
In Sec.~\ref{sec:heuristics}, we describe four primal heuristics that aim at quickly
constructing near-optimal solutions before or during branch and cut: a greedy construction
heuristic, an LP-based rounding heuristic, a Lin-Kernighan~\cite{KernighanLin1970} style
improvement heuristic, and a sub-MIP heuristic based on sparsification of the arc set.
We conclude the paper with Sec.~\ref{sec:experiments}, where we evaluate and compare the
effectiveness of the primal and dual techniques numerically on simulation data that are
generated to exhibit cyclic structures.
 \section{Formulations}
\label{sec:formulations}

We start with a straightforward formulation as a quadratic binary program.
Let $\Cluster \coloneqq \{1,\ldots,m\}$ denote the index set of clusters.
For each vertex $i \in \Bins $ and cluster $C_s$, $s \in \Cluster$,
we introduce a binary variable $x_{i,s}$ with
\begin{align*}
  x_{i,s} = 1 &\,\Leftrightarrow\, \text{vertex } i \text{ is assigned to cluster } C_s. 
\end{align*}
Then a first, nonlinear formulation for the cycle clustering problem is
\begin{subequations}
\begin{align}
  \max\hspace*{0.9em}\alpha\;
  & \lefteqn{\sum_{s \in \Cluster} \sum_{(i,j) \in \Trans} q^-_{i,j}\, x_{i,s} x_{j,s+1}
    + (1-\alpha) \sum_{s \in \Cluster} \sum_{\{i,j\} \in \Edges} q^+_{i,j}\, x_{i,s} x_{j,s}} \\
  \text{s.t.} \hspace*{2.4em}
  &\sum_{s \in \Cluster} x_{i,s} = 1 && \text{for all } i \in \Bins, \label{eq:assignment} \\
  &\sum_{i \in \Bins} x_{i,s} \ge 1 && \text{for all } s \in \Cluster, \label{eq:setcover} \\
  &x_{i,s} \in \{0,1\} && \text{for all } i \in \Bins, s \in \Cluster. \hspace*{7em}
\end{align}
\end{subequations}
Again, note that we use the cyclic notation $x_{i,m+1} = x_{i,1}$ for convenience.
The two parts of the objective function correspond to net flow and coherence, respectively.
Constraints~\eqref{eq:assignment} ensure that each vertex is assigned to exactly one cluster, while constraints~\eqref{eq:setcover} ensure that no cluster is empty.

Following Padberg~\cite{Padberg1989}, this binary quadratic program could be linearized by
introducing a binary variable and linearization constraints for each bilinear term, also
called McCormick inequalities~\cite{mccormick1976computability}.
A more compact linearization proposed by Liberti~\cite{Liberti2007} and refined by Mallach~\cite{Mallach2018} exploits the set partitioning constraints~\eqref{eq:assignment}.
Instead of McCormick inequalities, it relies on the reformulation-linearization technique (RLT) \cite{SheraliAdams1999}.
Each of equations in \eqref{eq:assignment} is multiplied by $x_{j,t}$ for all $j \in \Bins$, $t \in \Cluster$, and the bilinear terms~$x_{i,s}x_{j,t}$ are replaced by newly introduced linearization variables $w_{i,j}^{s,t}$.
This gives the mixed-binary program~(RLT)
\begin{align*}
  \max\hspace*{0.9em}\alpha\;
  & \lefteqn{\sum_{s \in \Cluster} \sum_{(i,j) \in \Trans} q^-_{i,j}\, w_{i,j}^{s,s+1}
    + (1-\alpha) \sum_{s \in \Cluster} \sum_{\{i,j\} \in \Edges} q^+_{i,j}\, w_{i,j}^{s,s}} \\
  \text{s.t.} \hspace*{2.4em}
  &\eqref{eq:assignment}, \eqref{eq:setcover}, \\
  &\sum_{s \in \Cluster} w_{i,j}^{s,t} = x_{j,t} && \text{for all } i,j \in \Bins, t \in \Cluster, \\
  &x_{i,s} \in \{0,1\} && \text{for all } i \in \Bins, s \in \Cluster, \hspace*{7em} \\
  &w_{i,j}^{s,t} \in [0,1] && \text{for all } (i,j) \in \Trans, s,t \in \Cluster,
  \intertext{where, by symmetry arguments, we can use the same variable for $w_{i,j}^{s,s}=w_{j,i}^{s,s}$.}
\end{align*}

\pagebreak

Our second, problem-specific formulation exploits the fact that we only need to
distinguish three cases for each edge $\{i,j\} \in \Edges$ in order to model its
contribution to the objective function.
Either $i$ and $j$ are in the same cluster, in consecutive clusters, or more than one
cluster apart.
We capture this by introducing binary variables $y_{i,j}$ with
\begin{align*}
  y_{i,j} = 1 &\,\Leftrightarrow\, \text{vertices } i \text{ and } j \text{ are in the same cluster}
  \intertext{for all edges $\{i,j\} \in \Edges$, $i < j$, and}
  z_{i,j} = 1 &\,\Leftrightarrow\, \text{vertex } i \text{ is one cluster before } j \text{ along the cycle}
\end{align*}
for all arcs $(i,j)\in\Trans$.
We will use the shorthand $y_{i,j}=y_{j,i}$ if $i > j$.
With these variables, we obtain the significantly more compact binary program (CC)
\begin{subequations}
\begin{align}
  \max\hspace*{1ex}
  \alpha\! \sum_{(i,j) \in \Trans} q^-_{i,j}\, z_{i,j}
  + (1-\alpha)\!\! \sum_{\{i,j\} \in \Edges} q^+_{i,j}\, y_{i,j}
  \hspace*{1.1em} \\
  \text{s.t.} \hspace*{14.8em}
  \eqref{eq:assignment}, \eqref{eq:setcover}&,
  \notag \\
  y_{i,j} + z_{i,j} + z_{j,i} \le 1 &  \quad\text{for all } \{i,j\} \in \Edges, \label{eq:oneedge}\\
   x_{i,s} + x_{j,s} - y_{i,j} + z_{i,j} - x_{j,s+1} - x_{i,s-1} \le 1 & \quad\text{for all } (i,j) \in \Trans, s \in \Cluster, \label{eq:incluster}\\
   x_{i,s} + x_{j,s+1} - z_{i,j} + y_{i,j} - x_{j,s} - x_{i,s+1} \le 1 & \quad\text{for all } (i,j) \in \Trans, s \in \Cluster, \label{eq:concurrcluster}\\
  x_{i,s} \in \{0,1\} & \quad\text{for all } i \in \Bins, s \in \Cluster, \\
  y_{i,j},z_{i,j}, z_{j,i}  \in \{0,1\} & \quad\text{for all } \{i,j\} \in \Edges, i < j. \label{eq:ccpend}
\end{align}
\end{subequations}
Again, the first sum in the objective function expresses the total net flow between
consecutive clusters, while the second sum expresses the coherence within all clusters.
Constraints $\eqref{eq:oneedge}$ ensure that two vertices cannot both be in the same
cluster and in consecutive clusters.
Constraints \eqref{eq:incluster} and \eqref{eq:concurrcluster} are best explained by
examining several weaker constraints first.
For $(i,j) \in \Trans$, $s \in \Cluster$, consider \begin{subequations}
\begin{align}
  x_{i,s} + x_{j,s} - y_{i,j} &\le 1 \label{weak1},\\
  x_{i,s} + z_{i,j} - x_{j,s+1} &\le 1 \label{weak2},\\
  x_{j,s} + z_{i,j} - x_{i,s-1} &\le 1 \label{weak3}.
\end{align}
\end{subequations}
The reasoning behind \eqref{weak1} is that if $i$ and $j$ are in the same cluster, then $y_{i,j}$ has to be equal to one. If $i$ is in some cluster $s$ and $z_{i,j}=1$, then \eqref{weak2} forces $j$ to be in the next cluster $s+1$. Analogously, if $j$ is in cluster $s$ and $z_{i,j} = 1$, then \eqref{weak3} ensures that $i$ is in the cluster preceding $s$. 
All of these three cases are covered by \eqref{eq:incluster}, since $y_{i,j}$ and $z_{i,j}$ are binary and at most one of the two can be nonzero at the same time. 
Constraints \eqref{eq:concurrcluster}, in the same way, cover the functionality of the weaker constraints
\begin{align*}
  x_{i,s} + x_{j,s+1} - z_{i,j} &\le 1, \\
  x_{i,s} + y_{i,j} - x_{j,s} &\le 1, \\
  x_{j,s+1} + y_{i,j} - x_{i,s+1} &\le 1.
\end{align*}
Since \eqref{eq:incluster} and \eqref{eq:concurrcluster} are defined for all $(i,j) \in \Trans$ and $s \in \Cluster$, the following implications hold:
\begin{itemize}
\item If $i$ and $j$ are in the same cluster, then $y_{i,j} = 1$ due to \eqref{eq:incluster}.
\item If $i$ and $j$ are in consecutive clusters, then $z_{i,j} = 1$ due to \eqref{eq:concurrcluster}.
\item If $y_{i,j} = 1$, there exists $s \in \{1,\ldots,m\}$ with ${x_{i,s}= x_{j,s} = 1}$ due to \eqref{eq:concurrcluster}.
\item If $z_{i,j} = 1$, there exists $s \in \{1,\ldots,m\}$ with ${x_{i,s}= x_{j,s+1} = 1}$ due to \eqref{eq:incluster}.
\end{itemize}

To compare both formulations (RLT) and (CC), we first note that one can transform any
feasible solution of (RLT) into a feasible solution of (CC) and vice versa, since the
assignment of the $x$-variables in both cases uniquely implies the assignment of the
remaining linearization variables.
Moreover, it holds for all $(i,j) \in \Trans, s \in \Cluster$ that 
$y_{i,j} = \sum_{s \in \Cluster} w_{i,j}^{s,s}$
and
$z_{i,j} = \sum_{s \in \Cluster} w_{i,j}^{s,s+1}$,
hence also any valid inequality for (CC) can be transformed into a valid inequality for
(RLT).

Second, we observe that the LP relaxations of both formulations yield the same dual bound
at the root node.
For both formulations an LP-optimal solution is induced by assigning $x_{i,s} = 1 / m$
for all $i \in \Bins$, $s \in \Cluster$.

Regarding size, (RLT) has roughly $0.5m^2$ times as many variables as (CC), whereas
(CC) has about twice the number of constraints.
Note that in practice we can exploit in both formulations that
linearization variables for pairs of vertices $i,j\in\Bins$ are only needed if
$q_{i,j}+q_{j,i}>0$.

For the following section, define the \emph{cycle clustering polytope} $CCP$ as the convex
hull of all feasible incidence vectors of (CC), i.e.,
\begin{equation*}
  CCP \coloneqq \conv\big( \{ (x,y,z) \in \mathbb{R}^{nm+1.5|\Trans|} : \text{(\ref{eq:assignment}--\ref{eq:setcover}), (\ref{eq:oneedge}--\ref{eq:ccpend})} \} \big)
\end{equation*}
One can show that the dimension of $CCP$ is $(m-1)n + |\Trans|$ if $m=3$ and $(m-1)n +
1.5|\Trans|$ if $4\leq m \leq n-2$.
Furthermore, the lower bound constraints $x_{i,s} \ge 0$, $y_{i,j} \ge 0$, and $z_{i,j} \ge
0$ define facets of $CCP$, whereas the upper bound constraints $x_{i,s} \leq 1$, $y_{i,j}
\leq 1$, and $z_{i,j} \leq 1$ do not.
For $m \geq 4$, one can show that also the model inequalities \eqref{eq:incluster} and
\eqref{eq:concurrcluster} define facets of $CCP$.
 \section{Valid Inequalities}
\label{sec:inequalities}

As explained in Sec.~\ref{sec:formulations}, any valid inequality for (CC) can be
transformed to a valid inequality for (RLT).
Hence, the following valid inequalities for the cycle clustering polytope $CCP$ apply
equally to (RLT).
The formal proofs that some of the inequalities are facet-defining follow standard
patterns and are left for an extended version of the paper due to space limitations.

In this section we use the notation $\clustind(i)\in\{1,\ldots,m\}$ to denote the (index
of the) cluster that a vertex~$i\in\Bins$ is assigned to.

\subsection{Triangle Inequalities}

First, consider the special case of exactly three clusters $m=3$ and three vertices~$i,j,k\in \Bins$.
If $j$ is in the successor cluster of $\clustind(i)$, 
and $k$ is in the successor cluster of $\clustind(j)$, 
then also $i$ has to be in the successor cluster of~$\clustind(k)$.
This is expressed by the valid inequality
\begin{align*}
  z_{i,j}+z_{j,k} - z_{k,i} \le 1 && \text{for all } (i,j),(j,k),(k,i) \in \Trans.
\end{align*}
Now assume $m \ge 4$.
The next inequality builds only on $y$-variables and can be found in the graph
partitioning literature, \eg~\cite{ChopraRao1993,Grotschel1990}.
If $i$ and $j$, and $j$ and $k$ are in the same cluster, then also
$i$ and $k$ must be in the same cluster, i.e., we have
\begin{align}
  \label{eq:trianlge-y}
  y_{i,j} + y_{j,k} - y_{i,k} \le 1 &&  \text{for all } (i,j),(j,k),(k,i) \in \Trans.
\end{align}
Similarly, if $i$ and $j$ are in the same cluster, and $k$ is in the successor of $\clustind(i)$,
then $k$ also has to be in the successor of $\clustind(j)$.
This gives the valid inequality
\begin{align}
  \label{eq:triangle-yz-1}
  y_{i,j} + z_{i,k} - z_{j,k} \le 1 &&  \text{for all } (i,j),(j,k),(k,i) \in \Trans,
\end{align}
and if we choose $k$ to be in the predecessor of $\clustind(i)$, then we obtain
\begin{align}
  \label{eq:triangle-yz-2}
  y_{i,j} + z_{k,i} - z_{k,j} \le 1 &&  \text{for all } (i,j),(j,k),(k,i) \in \Trans.
\end{align}
With these observations, one can prove the following result.
\begin{theorem}
  Let $m\geq 4$ and $i,j,k \in \Bins$ with $(i,j),(j,k),(i,k) \in \Trans$, then
\begin{equation}
    \label{ineq:triangle-y}
    y_{i,j}+y_{j,k}-y_{i,k} + \frac{1}{2}\left(z_{i,j}+z_{j,i}+z_{j,k} + z_{k,j} - z_{i,k} - z_{k,i}\right) \le 1
  \end{equation}
is a valid, facet-defining inequality for $CCP$.
\end{theorem}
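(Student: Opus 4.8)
The plan is to handle validity and the facet property separately, and to exploit a metric reformulation for the first.

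\emph{Validity.} Since $CCP$ is the convex hull of the integral feasible points of (CC), it suffices to check~\eqref{ineq:triangle-y} at such a point. The implications listed after~\eqref{eq:ccpend}, together with~\eqref{eq:oneedge} and the assignment equations~\eqref{eq:assignment}, show that for any two distinct vertices $p,q\in\Bins$ the quantity $\tau_{p,q}:=y_{p,q}+\tfrac12\bigl(z_{p,q}+z_{q,p}\bigr)$ equals $1$ if $\clustind(p)=\clustind(q)$, equals $\tfrac12$ if $\clustind(p)$ and $\clustind(q)$ are consecutive clusters, and equals $0$ otherwise; equivalently $\tau_{p,q}=1-\tfrac12\,\bar d(\clustind(p),\clustind(q))$, where $\bar d(s,t)$ is the shortest-path distance between cluster indices $s$ and $t$ in the cycle $1,2,\dots,m,1$, truncated at the value $2$. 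The left-hand side of~\eqref{ineq:triangle-y} is precisely $\tau_{i,j}+\tau_{j,k}-\tau_{i,k}$, so~\eqref{ineq:triangle-y} is equivalent to $\bar d(\clustind(i),\clustind(k))\le\bar d(\clustind(i),\clustind(j))+\bar d(\clustind(j),\clustind(k))$. As the truncation of any metric at a positive constant is again a metric, this triangle inequality holds, which proves validity. (The restriction $m\ge4$ is not needed here — the inequality also holds for $m=3$ — but it is needed for the facet claim, since for $m=3$ the polytope $CCP$ has smaller dimension and~\eqref{eq:oneedge} holds with equality.) Alternatively one runs the finite case distinction over the mutual positions of $\clustind(i),\clustind(j),\clustind(k)$, which mirrors the reasoning behind~\eqref{eq:trianlge-y}, \eqref{eq:triangle-yz-1} and~\eqref{eq:triangle-yz-2}.

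\emph{Facet property.} I would use the standard technique for partitioning polytopes. Assume $4\le m\le n-2$, so that $\dim CCP=(m-1)n+1.5|\Trans|$ by the dimension statement in Sec.~\ref{sec:formulations}, and let $F:=\{(x,y,z)\in CCP:\text{LHS of~\eqref{ineq:triangle-y}}=1\}$. This is a proper, nonempty face: placing $i,j,k$ in one common cluster and distributing the remaining vertices over the other clusters gives a point of $F$, whereas placing $i$ and $k$ together and $j$ in a different cluster gives a point with left-hand side $<1$. Let $\lambda^\top(x,y,z)=\lambda_0$ be valid for all of $F$, with $\lambda$ split into coefficients $a_{v,s}$ on the $x$-variables, $b_{\{p,q\}}$ on the $y$-variables, and $c_{p,q}$ on the $z$-variables. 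The goal is to show that $(\lambda,\lambda_0)$ is a linear combination of the defining equation of~\eqref{ineq:triangle-y} and the $n$ assignment equations~\eqref{eq:assignment}.

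This is carried out by a sequence of exchange arguments. Moving a vertex $v\notin\{i,j,k\}$ between two clusters (both occupied before and after the move, which is where $n\ge m+2$ is used) produces two points of $F$ — the triple $\bigl(\clustind(i),\clustind(j),\clustind(k)\bigr)$ is untouched — whose $\lambda$-values must agree; letting $v$ range over all clusters, and performing analogous toggles of the cluster relationship of pairs avoiding $\{i,j,k\}$, one shows that $a_{v,s}$ is independent of $s$ for every such $v$ (hence absorbable into the assignment equation of $v$) and that $b_{\{v,w\}}=c_{v,w}=0$ for every pair $\{v,w\}\neq\{i,j\},\{j,k\},\{i,k\}$. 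It then remains to analyze the restriction of $\lambda$ to the coordinates $x_{i,\cdot},x_{j,\cdot},x_{k,\cdot}$ and the nine variables $y_{i,j},y_{j,k},y_{i,k},z_{i,j},z_{j,i},z_{j,k},z_{k,j},z_{i,k},z_{k,i}$: using a handful of explicitly chosen pairs of points of $F$ that differ only in the positions of $i,j,k$, one forces $a_{i,\cdot},a_{j,\cdot},a_{k,\cdot}$ to be cluster-independent as well, $c_{i,j}=c_{j,i}$, $c_{j,k}=c_{k,j}$, $c_{i,k}=c_{k,i}$, the three $y$-coefficients to be in the ratio $1:1:(-1)$, and each symmetric pair of $z$-coefficients to be exactly one half of the corresponding $y$-coefficient (with the sign flip on the $\{i,k\}$-pair). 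Together this shows $\lambda$ has the required form.

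The main obstacle is this second, combinatorial part: keeping the exchange bookkeeping under control — in particular pinning down the $\pm\tfrac12$ coefficients on the $z$-variables, and verifying that every intermediate point genuinely lies in $F$ (the relation-triple among $i,j,k$ must stay on the face and all $m$ clusters must remain non-empty, which is exactly where the hypotheses $m\ge4$ and $m\le n-2$ enter). The validity part, by contrast, is essentially immediate once the identification with the truncated cycle metric is spotted, and I would present that reformulation first because it also explains why the coefficient pattern of~\eqref{ineq:triangle-y} is the natural one.
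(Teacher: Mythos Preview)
Your proposal is correct. The paper does not actually supply a proof: it only remarks that validity follows ``with these observations'' (meaning the case-by-case inequalities \eqref{eq:trianlge-y}--\eqref{eq:triangle-yz-2}) and that the facet arguments ``follow standard patterns and are left for an extended version.''

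On validity, your route is genuinely different from what the paper hints at. The paper's implicit argument is a finite case distinction on the relative cluster positions of $i,j,k$, essentially re-deriving \eqref{eq:trianlge-y}--\eqref{eq:triangle-yz-2} in each branch. Your reformulation via $\tau_{p,q}=y_{p,q}+\tfrac12(z_{p,q}+z_{q,p})=1-\tfrac12\bar d(\clustind(p),\clustind(q))$ collapses all cases into the single triangle inequality for the truncated cycle metric, and the observation that truncating a metric at a positive constant preserves the triangle inequality is both correct and clean. This also explains structurally why the $\tfrac12$-coefficients on the $z$-terms are the right ones, which the case-by-case argument does not.

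On the facet claim, your exchange-argument outline is exactly the ``standard pattern'' the paper alludes to: fix a supporting hyperplane $\lambda^\top(x,y,z)=\lambda_0$ through $F$, use single-vertex moves and pair toggles outside $\{i,j,k\}$ to kill the corresponding coefficients (modulo the assignment equations), and then use the tight configurations $(\bar d_{ij},\bar d_{jk},\bar d_{ik})\in\{(0,0,0),(0,1,1),(1,0,1),(0,2,2),(2,0,2),(1,1,2)\}$ to pin down the nine remaining coefficients on $\{i,j,k\}$. Your caveat about keeping all clusters nonempty during the exchanges is well placed; that is indeed the only point where care is needed, and $4\le m\le n-2$ provides just enough slack.
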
 

The final set of triangle inequalities is derived from the following observations.
If vertex~$i$ is in the predecessor of~$\clustind(j)$ as well as in the predecessor of~$\clustind(k)$, then
$j$ and $k$ must be assigned to the same cluster.
This gives the valid inequality
\begin{align}
  \label{ineq:facet-triangle-forward}
  z_{i,j} + z_{i,k} - y_{j,k} \le 1 && \text{for all } (i,j),(i,k),(j,k) \in \Trans.
\end{align}
Conversely, if $i$ is in the successor of~$\clustind(j)$ as well as in the successor of~$\clustind(k)$,
then $j$ and $k$ must be assigned to the same cluster, i.e., we have
\begin{align}
  \label{ineq:facet-triangle-backward}
  z_{j,i} + z_{k,i} - y_{j,k} \le 1 && \text{for all } (i,j),(i,k),(j,k) \in \Trans.
\end{align}
In the special case that $m = 4$ we can prove that the stronger inequality
\begin{equation}
  \label{ineq:facet-triangle-4}
  z_{i,j} + z_{i,k} - 2y_{j,k} - (z_{j,k} + z_{k,j} + z_{j,i} + z_{k,i}) \le 0
\end{equation}
must hold.	
The reason is that if $j$ is assigned to the successor of~$\clustind(i)$, but $k$ is not assigned
to the successor of~$\clustind(i)$, then $k$ has to be in one of the other three clusters.
In each of those three cases, one of the variables $z_{j,k}$, $z_{k,j}$, $z_{k,i}$ has to be
set to one:
If $k$ is in the same cluster as $i$, then $z_{k,j}$ has to be one;
if $k$ is in the predecessor of $\clustind(i)$, then $z_{k,i}$ has to be one;
if $k$ is in the successor of $\clustind(j)$, then $z_{j,k}$ has to be one.
The same argument holds if $z_{i,k}$ is one, but $z_{i,j}$ is not.
Regarding the strength of the above triangle inequalities, we can show the following.

\begin{theorem}
  Let $i,j,k \in \Bins$ with $(i,j),(j,k),(i,k) \in \Trans$. 
If $m=4$, then \eqref{ineq:facet-triangle-4} is facet-defining for $CCP$.
If $m > 4$, then \eqref{ineq:facet-triangle-forward} and \eqref{ineq:facet-triangle-backward} are facet-defining for $CCP$.
\end{theorem}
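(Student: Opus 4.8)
\emph{Validity} of \eqref{ineq:facet-triangle-4}, \eqref{ineq:facet-triangle-forward} and \eqref{ineq:facet-triangle-backward} has already been argued in the discussion preceding the theorem, so only the facet property remains, and I would prove it by the standard indirect argument. In the regime $4\le m\le n-2$, the affine hull of $CCP$ is cut out exactly by the $n$ assignment equations \eqref{eq:assignment} — this is what the dimension formula $\dim CCP=(m-1)n+1.5|\Trans|$ quoted in Sec.~\ref{sec:formulations} encodes, and it in particular uses $n\ge m+2$ to know that \eqref{eq:setcover} is slack on $\aff(CCP)$. It therefore suffices to show: whenever a linear equation $a^\top x+b^\top y+c^\top z=\gamma$ holds at every point of the face $F$ cut out by the inequality under consideration, the vector $(a,b,c,\gamma)$ is a linear combination of that inequality (read as an equation) and of the equations \eqref{eq:assignment}. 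We may normalize the candidate equation modulo \eqref{eq:assignment}, say by imposing $a_{v,m}=0$ for all $v\in\Bins$; after this normalization the goal is to show that $(a,b,c)$ is a scalar multiple of the coefficient vector of the inequality.

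The driving observation is that the left-hand side of each of the three inequalities involves only the three distinguished vertices $i,j,k$. Hence (i) re-assigning any \emph{free} vertex $v\notin\{i,j,k\}$ from one cluster to another never leaves $F$, and (ii) for \eqref{ineq:facet-triangle-forward} one may even pin $i$ and $j$ to two fixed consecutive clusters and still move $k$ into \emph{any} of the $m$ clusters without leaving $F$ — a one-line case check shows the left-hand side stays equal to $1$ no matter where $k$ goes — and symmetrically for \eqref{ineq:facet-triangle-backward}. For \eqref{ineq:facet-triangle-forward} I would therefore fix the base clustering ``$i$ in cluster $1$, $j$ and $k$ in cluster $2$, and the $n-3$ free vertices distributed so that all of clusters $3,\dots,m$ are nonempty'' (the bound $m\le n-2$ leaves enough room, also after the perturbations below) and read off coefficient relations from the moves of type (i) and (ii).

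From the type-(i) moves I would deduce, one coefficient at a time and using a designated ``spare'' cluster to isolate the handful of $y$- and $z$-variables that toggle when $v$ jumps, that $b_{\{v,w\}}=0$ and $c_{v,w}=c_{w,v}=0$ for every pair of free vertices and that $a_{v,s}$ is independent of $s$ for free $v$, hence $0$ by the normalization. Combining the type-(i) moves with the type-(ii) moves and with re-placements of $i,j,k$ relative to a single free witness vertex then pins down the remaining coefficients — $b_{\{j,k\}}$, $c_{i,j}$, $c_{i,k}$, the coefficients linking $\{i,j,k\}$ to free vertices, and the coefficients among $i,j,k$ themselves — and forces them into exactly the pattern of \eqref{ineq:facet-triangle-forward}, up to one global scalar; the inequality \eqref{ineq:facet-triangle-backward} is its mirror image. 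For $m=4$ the same scheme drives the proof of \eqref{ineq:facet-triangle-4}, but the face $F$ is considerably thinner: its tight clusterings are exactly those identified in the case analysis given before the theorem (if $z_{i,j}=1$ but $z_{i,k}=0$, then $k$ sits in one of the remaining three clusters and the matching one of $z_{j,k},z_{k,j},z_{k,i}$ is forced to $1$, and so on), so every admissible perturbation must be chosen inside this smaller set; one should also recall $m=4\le n-2$ to still have enough free vertices.

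The step I expect to be the main obstacle is exactly this bookkeeping in the thin-face / small-cluster regime: a single-vertex re-assignment simultaneously flips several $y$- and $z$-variables, so it is delicate both to keep every cluster nonempty — constraint \eqref{eq:setcover} — and to stay inside $F$; the crux of a clean write-up is to choose the base clustering and to order the perturbations so that the coefficients are disentangled one after another. Once such a sequence is fixed, each individual deduction is routine. Finally, the case split in the statement is genuine and not merely cosmetic: the strengthened inequality \eqref{ineq:facet-triangle-4} is valid only for $m=4$ (one checks it is already violated for $m=5$), so for $m>4$ one must work with the weaker \eqref{ineq:facet-triangle-forward} and \eqref{ineq:facet-triangle-backward}, and the two regimes genuinely require separate arguments.
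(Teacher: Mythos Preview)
The paper does not actually prove this theorem: at the start of Sec.~\ref{sec:inequalities} it says that ``the formal proofs that some of the inequalities are facet-defining follow standard patterns and are left for an extended version of the paper due to space limitations.'' So there is nothing to compare your proposal against; all one can do is check whether your outline is the standard argument the authors allude to, and whether it is sound.

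It is. Your plan is exactly the indirect (maximal-face) method: use the dimension formula $\dim CCP=(m-1)n+1.5|\Trans|$ for $4\le m\le n-2$ to conclude that $\aff(CCP)$ is described by the $n$ assignment equations \eqref{eq:assignment} alone, normalize a hypothetical supporting equation modulo these, and then read off coefficients from pairs of tight points that differ in the placement of a single vertex. Your key observation for \eqref{ineq:facet-triangle-forward} --- that once $i$ and $j$ sit in consecutive clusters the left-hand side stays at $1$ regardless of where $k$ goes --- is correct and gives you enough freedom to move $k$ through all clusters; combined with moves of free vertices and with rotations of the pair $(i,j)$ this is enough to force the coefficient pattern. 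Your remarks on the $m=4$ case are also on target: the tight set for \eqref{ineq:facet-triangle-4} is genuinely smaller (for instance, $i,j,k$ all in the same cluster is no longer tight), so the perturbation schedule must be chosen more carefully, and your check that \eqref{ineq:facet-triangle-4} is violated for $m\ge 5$ explains why the case split in the statement is necessary. The only thing I would add to your bookkeeping caveat is to be explicit that $n\ge m+2$ guarantees at least one \emph{spare} free vertex beyond the $m-2$ needed to populate clusters $3,\dots,m$, since several of the isolation moves rely on having such a spare.
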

All types of triangle inequalities can be separated at once by complete enumeration in $\mathcal{O}(n^3)$~time.

\subsection{Partition Inequalities}
	
The following generalization of the triangle
inequalities~\eqref{ineq:facet-triangle-forward} is inspired by~\cite{Grotschel1990}.
\begin{theorem}
  Let $S,T \subseteq \Bins$ with $S \cap T = \emptyset$. The \emph{partition inequality}
\begin{equation}
    \label{ineq:partition}
    \sum_{i \in S, j \in T} z_{i,j} - \sum_{i,j \in S,\, i < j} y_{i,j} - \sum_{i,j \in T,\, i < j} y_{i,j} \le \min \{|S|,|T|\}
  \end{equation}
is valid for $CCP$. If $m > 4$ and $|S|-|T|=\pm 1$, it is facet-defining.
\end{theorem}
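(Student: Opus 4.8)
\emph{Validity.} Since $CCP$ is the convex hull of incidence vectors of feasible partitions, it suffices to verify \eqref{ineq:partition} for a single partition $C_1,\dots,C_m$. The plan is to pass to the cardinality profile $a_t \coloneqq |S\cap C_t|$ and $b_t \coloneqq |T\cap C_t|$, with cyclic indices. Because $y_{i,j}=1$ exactly when $i$ and $j$ share a cluster and $z_{i,j}=1$ exactly when $\clustind(j)=\clustind(i)+1$, the left-hand side of \eqref{ineq:partition} equals $\sum_{t=1}^{m} a_t b_{t+1} - \sum_{t=1}^{m}\binom{a_t}{2} - \sum_{t=1}^{m}\binom{b_t}{2}$. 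The estimate driving the argument is the elementary fact that for nonnegative integers $a,b$,
\[
  ab - \binom{a}{2} - \binom{b}{2} \;=\; \tfrac12(a+b) - \tfrac12(a-b)^2 \;\le\; \tfrac12(a+b) - \tfrac12|a-b| \;=\; \min\{a,b\},
\]
with equality exactly when $|a-b|\le 1$. Applying this to each pair $(a_t,b_{t+1})$ and reindexing the $\binom{b_t}{2}$-sum cyclically bounds the left-hand side by $\sum_t \min\{a_t,b_{t+1}\}$, and since $\sum_t a_t=|S|$ and $\sum_t b_{t+1}=|T|$ this is at most $\min\{|S|,|T|\}$. This proves validity; inspecting the equality cases of the two estimates will also characterize the tight partitions, which I use below.

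\emph{Reduction and set-up for the facet part.} Assume $m>4$. It suffices to treat $|S|=|T|+1$: the case $|S|=|T|-1$ follows by applying that result to the pair $(T,S)$ together with the symmetry of $CCP$ under reversing the cyclic order of the clusters, which fixes every $y$-variable, swaps $z_{i,j}\leftrightarrow z_{j,i}$, exchanges \eqref{eq:incluster} and \eqref{eq:concurrcluster}, and therefore maps \eqref{ineq:partition} for $(S,T)$ to \eqref{ineq:partition} for $(T,S)$. So fix $S,T$ with $|S|=|T|+1$ (hence $\min\{|S|,|T|\}=|T|\ge 1$). Here the equality analysis says that an integral point lies on the face of \eqref{ineq:partition} precisely when $b_{t+1}\le a_t\le b_{t+1}+1$ for every $t$ and $\sum_t(a_t-b_{t+1})=1$, i.e.\ the $S$- and $T$-profiles march in lockstep around the cycle except for a single index $t^{\star}$ where $a_{t^{\star}}=b_{t^{\star}+1}+1$. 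Following the standard facet scheme, I would take an arbitrary linear equation $\langle c,x\rangle+\langle d,y\rangle+\langle e,z\rangle=f$ satisfied by all integral points of this face and show that $(c,d,e,f)$ is a linear combination of the coefficient vector of \eqref{ineq:partition} (with right-hand side $|T|$) and of the $n$ assignment equations \eqref{eq:assignment}; since the affine hull of $CCP$ is cut out by exactly those $n$ equations, this is what is needed.

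\emph{Exchange arguments.} The coefficients would be pinned down by comparing pairs of tight partitions that differ by a single local change, each pair yielding one linear relation among $(c,d,e)$. The building blocks are: (i) permuting two vertices of $S$, or two vertices of $T$, that lie in the same cluster — the profile, hence tightness, is unchanged, which forces $c$, $d$, $e$ to respect the internal symmetry of $S$ and of $T$; (ii) moving a vertex $v\notin S\cup T$ between any two clusters that remain nonempty — again the profile is unchanged, forcing $c_{v,\cdot}$ to be constant in the cluster index (absorbable into an assignment equation) and all $d,e$ coefficients on edges incident to such $v$ to vanish, matching \eqref{ineq:partition}; (iii) a cyclic shift of a tight partition by one cluster, relating $c_{i,s}$ to $c_{i,s+1}$; and (iv) moves that transfer one vertex of $S$ from one cluster to an adjacent one while re-routing a vertex of $T$ so as to preserve the ``$b_{t+1}\le a_t\le b_{t+1}+1$'' pattern — these are the moves that actually touch the variables $z_{i,j}$ with $i\in S$, $j\in T$, and comparing them fixes the relative weights of the $y$- and $z$-coefficients to the $(-1,+1)$ ratio of \eqref{ineq:partition}. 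Processing all these relations reduces $(c,d,e,f)$, modulo the assignment equations, to a scalar multiple of the coefficient vector of \eqref{ineq:partition}, which is the claim.

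\emph{Expected main obstacle.} Validity is short; the substance is the facet half, and there the technical core is step (iv): an admissible transfer-and-reroute move perturbs several profile entries simultaneously, so one must verify carefully that it keeps the partition both tight \emph{and} feasible — in particular that no cluster is emptied, which is where $m>4$ is used, since for $m=4$ the inequality is already dominated (e.g.\ by \eqref{ineq:facet-triangle-4} in the smallest case $|S|=2$, $|T|=1$, where \eqref{ineq:partition} reduces to \eqref{ineq:facet-triangle-forward}). A second, bookkeeping-heavy difficulty is the degenerate regime in which $S\cup T$ nearly exhausts $\Bins$, leaving too few ``free'' vertices for steps (ii) and (iii); there one must extract the same relations using only rearrangements among the vertices of $S$ and of $T$, and one must also check up front that tight partitions with all clusters nonempty exist at all for the given cardinalities.
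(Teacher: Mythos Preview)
The paper does not actually prove this theorem: at the start of Sec.~\ref{sec:inequalities} it states that ``the formal proofs that some of the inequalities are facet-defining follow standard patterns and are left for an extended version of the paper due to space limitations,'' and after the theorem statement it moves directly to separation heuristics. So there is no proof in the paper to compare against.

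That said, your proposal is consistent with what the paper signals. Your validity argument is complete and correct: rewriting the left-hand side via the profile $(a_t,b_t)$, cyclically reindexing the $\binom{b_t}{2}$-sum, and applying $ab-\binom{a}{2}-\binom{b}{2}=\tfrac12(a+b)-\tfrac12(a-b)^2\le\min\{a,b\}$ termwise gives exactly the bound $\sum_t\min\{a_t,b_{t+1}\}\le\min\{|S|,|T|\}$. The equality characterisation you extract (each $a_t\in\{b_{t+1},b_{t+1}+1\}$ with a single surplus index) is also right and is the natural starting point for the facet part. For the facet-defining claim your plan is precisely the ``standard pattern'' the paper alludes to: take a generic valid equation on the face, and use local exchanges between tight feasible points to force its coefficients to be an affine combination of \eqref{ineq:partition} and the assignment equations \eqref{eq:assignment}. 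Your reduction of $|S|=|T|-1$ to $|S|=|T|+1$ via the order-reversing automorphism of $CCP$ is valid, and your identification of the role of $m>4$ (room to move vertices without emptying a cluster, and avoiding the domination visible in \eqref{ineq:facet-triangle-4}) matches the theorem's hypothesis.

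The one place where your outline is thin is exactly where you flag it: step~(iv), the transfer-and-reroute moves, and the degenerate regime where $|S\cup T|$ is close to $n$. These are not conceptual gaps but genuine case work that has to be written out; in particular you will need enough distinct tight partitions to generate $\dim(CCP)-1$ affinely independent directions, and when $n-|S|-|T|\le m-2$ the ``free vertex'' moves in~(ii) are constrained, so the relations on $c,d,e$ must come from rearranging $S\cup T$ alone. None of this invalidates the approach, but it is where the actual labour of the proof lies.
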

These inequalities can be separated heuristically by deriving $S,T$ from almost
violated triangle inequalities.
To limit the computational effort, we only create partition inequalities with $|S|+|T|\le
5$ in our implementation.

\subsection{Subtour and Path Inequalitities}

Since we consider a fixed number of clusters, if $(i,j)$ is an arc between two clusters,
then there must be exactly $m - 1$ further arcs along the cycle before vertex~$i$ is
reached again.
Formally, let $K = \{(i_1,i_2),(i_2,i_3),\ldots,(i_{\ell-1},i_{\ell}), (i_\ell,i_1)\}\subseteq E$
be any cycle of length $1 < |K| < m$, then the \emph{subtour (elimination) inequality}
\begin{equation}
  \label{ineq:subtour-easy}
  \sum_{(i,j) \in K} z_{i,j} \le |K| - 1 
\end{equation}
is valid for $CCP$.
This inequality can be extended by adding variables for arcs inside a cluster.
Let $U \subset K$ be any strict subset of $K$, then the \emph{extended subtour
(elimination) inequality}
\begin{equation}
  \label{ineq:subtour}
  \sum_{(i,j) \in K} z_{i,j} + \sum_{(i,j)\in U} y_{i,j} \le |K| - 1
\end{equation}
is valid for $CCP$, because in any cycle the number of forward transitions must be a
multiple of the number of clusters~$m$.
The inequality \eqref{ineq:subtour} is stronger than \eqref{ineq:subtour-easy} in the
sense that it defines a higher-dimensional face of $CCP$.
Figure~\ref{fig:subtourpath} illustrates an extended subtour inequality.
While extended subtour inequalities must not define facets, they prove effective in practice and can be separated efficiently.

\begin{theorem}
  Extended subtour inequalities can be separated in polynomial time.
\end{theorem}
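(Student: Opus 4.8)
The plan is to reduce separation of the extended subtour inequalities to a polynomial number of bounded-length longest-walk computations, each solved by a Bellman--Ford-style layered dynamic program. Fix a fractional point $(x^*,y^*,z^*)$ satisfying the model constraints (in particular \eqref{eq:oneedge}). First I would record two simplifications. (i) For a \emph{fixed} cycle $K$, the best strict subset $U\subsetneq K$ in \eqref{ineq:subtour} is $U=K\setminus\{e^*\}$ with $e^*\in K$ minimizing $y^*$, since $y^*\ge 0$; so it suffices to search over all pairs $(K,e^*)$ with $e^*\in K$ and to maximize $g(K,e^*):=\sum_{(i,j)\in K}z^*_{i,j}+\sum_{(i,j)\in K\setminus\{e^*\}}y^*_{i,j}-(|K|-1)$, reporting a violated cut whenever this is positive. (ii) Distributing the term $-(|K|-1)$ gives $g(K,e^*)=z^*_{e^*}+\sum_{(i,j)\in K\setminus\{e^*\}}w_{i,j}$ with arc weights $w_{i,j}:=z^*_{i,j}+y^*_{i,j}-1$; by \eqref{eq:oneedge} together with $z^*_{j,i}\ge 0$ we have $w_{i,j}\le 0$ for every arc. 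Writing $e^*=(p,q)$, the arcs $K\setminus\{e^*\}$ form a simple $q$--$p$ path with $|K|-1$ arcs, so we are left with the task: for each arc $(p,q)\in\Trans$, compute the maximum $w$-weight of a walk from $q$ to $p$ using between $1$ and $m-2$ arcs, add $z^*_{p,q}$, and test positivity.

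For a fixed source $q$ I would run the layered recursion $D_q[v][k]=\max_{u:(u,v)\in\Trans}\big(D_q[u][k-1]+w_{u,v}\big)$ with $D_q[q][0]=0$ and $D_q[v][0]=-\infty$ otherwise, for $k=1,\dots,m-2$, and then set $\gamma(p,q):=z^*_{p,q}+\max_{1\le k\le m-2}D_q[p][k]$. Running this once per source and combining costs $\mathcal{O}(n^3m)$ time, which is polynomial. The algorithm reports a violated extended subtour inequality exactly when $\gamma(p,q)>0$ for some arc $(p,q)$.

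Correctness has two directions, and the non-positivity $w\le 0$ makes both clean. If $\gamma(p,q)>0$, I would trace back the dynamic program to a walk $W$ from $q$ to $p$; since every cycle contained in $W$ has $w$-weight at most $0$, deleting those cycles yields a \emph{simple} path $P$ from $q$ to $p$ with $w(P)\ge w(W)$ and $1\le|P|\le|W|\le m-2$ (the lower bound because $q\ne p$). As $P$ is simple it cannot traverse $(p,q)$, so $K:=P\cup\{(p,q)\}$ is a genuine simple cycle with $2\le|K|\le m-1$; taking $U:=K\setminus\{(p,q)\}$, the inequality \eqref{ineq:subtour} for $(K,U)$ is violated by exactly $z^*_{p,q}+w(P)\ge\gamma(p,q)>0$. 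Conversely, if \eqref{ineq:subtour} is violated for some $K$ and $U\subsetneq K$, pick any $e^*=(p,q)\in K\setminus U$; then $g(K,e^*)\ge\sum_{(i,j)\in K}z^*_{i,j}+\sum_{(i,j)\in U}y^*_{i,j}-(|K|-1)>0$, and since $K\setminus\{e^*\}$ is a $q$--$p$ walk with $|K|-1\le m-2$ arcs we obtain $\gamma(p,q)\ge z^*_{p,q}+w(K\setminus\{e^*\})=g(K,e^*)>0$, so the algorithm detects it.

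The main obstacle I anticipate is the interaction between the cardinality bound $|K|<m$ and the reduction to simple cycles: longest walks without a length restriction are NP-hard, so it is essential that the dynamic program indexes states by the exact number of arcs (affordable because the length is bounded by $m\le n$) and that the non-positivity of the arc weights, inherited from \eqref{eq:oneedge}, lets us shortcut any non-simple walk to a simple path of no smaller weight without exceeding the length budget. A secondary point to get right is the bookkeeping around the excluded arc $e^*$, so that the routine always returns a legitimate pair $(K,U)$ with $U\subsetneq K$ and $1<|K|<m$.
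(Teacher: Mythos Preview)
Your proof is correct and follows essentially the same route as the paper: fix the excluded arc, run a layered Bellman--Ford dynamic program indexed by walk length (bounded by $m$) to find maximum-weight bounded-length walks, and close each into a cycle, all in $\mathcal{O}(n^3m)$ time. Your absorption of the $-(|K|-1)$ term into non-positive arc weights $w_{i,j}=z^*_{i,j}+y^*_{i,j}-1\le 0$ (via \eqref{eq:oneedge}) is a neat variant that makes the reduction from a non-simple walk to a simple cycle of no smaller violation explicit, whereas the paper keeps non-negative weights $c_{i,j}\in\{z^*_{i,j},\,z^*_{i,j}+y^*_{i,j}\}$ and compares against the threshold $\ell-1$, leaving that step implicit.
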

\begin{proof}
  Let $(x,y,z)$ be a given LP solution.
By rotational symmetry, we may assume w.l.o.g. that $U = K \setminus \{(i_1,i_2)\}$.
For a fixed start node $i_1$, define weights
\begin{align*}
c_{i,j} \coloneqq \begin{cases}
      z_{i,j}, &\text{ if } i = i_1, \\
      z_{i,j} + y_{i,j}, &\text{ otherwise,}
    \end{cases}
  \end{align*}
for all $(i,j)\in\Trans$, and let $A' = \{(i,j)\in\Trans : c_{i,j} > 0\}$.
Then violated extended subtour inequalities correspond to cycles in the
  directed graph $D=(\Bins, A')$ with length $\ell < m$ and weight greater than $\ell - 1$.

  Hence, \eqref{ineq:subtour} can be separated by computing, for each start node, maximum
  weight walks between all pairs of nodes for $\ell = 2,...,m - 1$.
Since $c_{i,j}\geq 0$, this is possible in $\mathcal{O}(n^3m)$ time by dynamic programming.
\end{proof}

Similarly,
let $P = \{(i_1,i_2,),\ldots, (i_{m-1},i_m)\}$ be a path from $i_1$ to $i_m\neq i_1$ of
length $m-1$, and let $U \subset P$.
Then the \emph{path inequality}
\begin{equation}
  \sum_{(i,j) \in P}z_{i,j}+ \sum_{(i,j)\in U} y_{i,j} + y_{i_1,i_m} \le m - 1
  \label{ineq:path}
\end{equation}
is valid for $CCP$, for the following reason.
Suppose $\sum_{(i,j) \in P}z_{i,j}+ \sum_{(i,j)\in U} y_{i,j}$ equals $m-1$, then there are between one
and $m-1$ forward transitions from $i_1$ to $i_m$. Therefore, $i_1$ and $i_m$ cannot be in
the same cluster and $y_{i_1,i_m}$ has to be zero.
These inequalities can be separated in the same way as the extended subtour inequalities.
Figure~\ref{fig:subtourpath} gives an example of a path inequality.

\begin{figure}[t]
  \centering
  \resizebox{0.35\linewidth}{!}{
	\begin{tikzpicture}[
	statenode/.style = {draw,shape=circle,fill=blue, inner sep=0pt, minimum size = 2pt},
	]
	\usetikzlibrary{calc}
	
	\node (B) at (360/5: 3cm)[draw, circle , minimum size = 3cm]{};
	\node (A) at (2*360/5: 3cm)[draw, circle , minimum size = 3cm]{};
	\node (E) at (3*360/5: 3cm)[draw, circle , minimum size = 3cm]{};
	\node (D) at (4*360/5: 3cm)[draw, circle, minimum size = 3cm]{};
	\node (C) at (360: 3cm)[draw, circle , minimum size = 3cm]{};

	\node (i_1) at (2*360/5: 3cm) [shift = {(-1,-0.8)},draw,shape=circle,fill=blue, inner sep=0pt, minimum size = 4pt] {};
	\node (i_2) at (2*360/5: 3cm)[shift = {(0.2,0.2)},draw,shape=circle,fill=blue, inner sep=0pt, minimum size = 4pt] {};
	\node (i_3) at (360/5: 3cm)[shift = {(0,1)},draw,shape=circle,fill=blue, inner sep=0pt, minimum size = 4pt] {};
	\node (i_4) at (360/5: 3cm)[shift = {(-0.2,-0.6)}, draw,shape=circle,fill=blue, inner sep=0pt, minimum size = 4pt] {};
	\node (i_5) at (360: 3cm)[shift = {(0.2,0.5)},draw,shape=circle,fill=blue, inner sep=0pt, minimum size = 4pt] {};

	\draw[->, color = red, line width = 2pt] (i_1) to node[midway, shift={(0.5,-0.1)}]{\Large $z_{i_1,i_2}$}(i_2){};
	\draw[<->, color = blue, line width = 2pt, bend left] (i_1) to node[midway, shift={(-0.5,0.1)}]{\Large $y_{i_1,i_2}$}(i_2){};
	\draw[->, color = red, line width = 2pt] (i_2) to node[midway, above left]{\Large $z_{i_2,i_3}$}(i_3){};
	\draw[->, color = red, line width = 2pt] (i_3) to node[midway, below left]{\Large $z_{i_3,i_4}$}(i_4){};
	\draw[<->, color = blue, line width = 2pt, bend left] (i_3) to node[midway, right]{\Large $y_{i_3,i_4}$}(i_4){};
	\draw[->, color = red, line width = 2pt] (i_4) to node[midway, above right]{\Large $z_{i_4,i_5}$}(i_5){};
	\draw[->, color = red, line width = 2pt] (i_5) to node[midway, below]{\Large $z_{i_5,i_1}$}(i_1){};
	
	\end{tikzpicture}
}
\quad\qquad
\resizebox{0.35\linewidth}{!}{
	\begin{tikzpicture}[
	statenode/.style = {draw,shape=circle,fill=blue, inner sep=0pt, minimum size = 2pt},
	]
	\usetikzlibrary{calc}
	
	\node (B) at (360/5: 3cm)[draw, circle , minimum size = 3cm]{};
	\node (A) at (2*360/5: 3cm)[draw, circle , minimum size = 3cm]{};
	\node (E) at (3*360/5: 3cm)[draw, circle , minimum size = 3cm]{};
	\node (D) at (4*360/5: 3cm)[draw, circle, minimum size = 3cm]{};
	\node (C) at (360: 3cm)[draw, circle , minimum size = 3cm]{};

	\node (i_1) at (3*360/5: 3cm) [shift = {(-0.3,-0.2)},draw,shape=circle, inner sep=0pt,fill=blue, minimum size = 4pt, label={[xshift=0cm,yshift=-.7cm]}] {};
	\node (i_2) at (2*360/5: 3cm)[shift = {(0.2,0.2)},draw,shape=circle, inner sep=0pt,fill=blue, minimum size = 4pt] {};
	\node (i_3) at (360/5: 3cm)[shift = {(.5,.7)},draw,shape=circle, inner sep=0pt, fill=blue, minimum size = 4pt] {};
	\node (i_5) at (360: 3cm)[shift = {(0.2,0.5)},draw,shape=circle, inner sep=0pt,fill=blue, minimum size = 4pt,label={[xshift=0cm,yshift=-.8cm]}] {};
	\node (i_4) at (360/5: 3cm)[shift = {(.5,-.5)},draw,shape=circle,inner sep=0pt,fill=blue,minimum size=4pt]{};
	
	\draw[->, color = red, line width = 2pt] (i_1) to node[midway, shift={(0.5,-0.1)}]{\Large $z_{i_1,i_2}$}(i_2){};
	\draw[->, color = red, line width = 2pt] (i_2) to node[midway, above left]{\Large $z_{i_2,i_3}$}(i_3){};
	\draw[->, color = red, line width = 2pt] (i_3) to node[midway, below left]{\Large $z_{i_3,i_4}$}(i_4){};
	\draw[<->, color = blue, line width = 2pt, bend left] (i_3) to node[midway, right]{\Large $y_{i_3,i_4}$}(i_4){};
	\draw[->, color = red, line width = 2pt] (i_4) to node[midway, below left]{\Large $z_{i_4,i_5}$}(i_5){};
	
	\draw[<->, color = blue, line width = 2pt, bend left] (i_5) to node(prob)[midway, below right]{\Large $y_{i_5,i_1}$}(i_1){};
\end{tikzpicture}
}
   \caption{Illustration of an extended subtour inequality (left) and a path inequality (right) in a $5$-cluster problem.  Not all of the displayed variables can be set to one.}
  \label{fig:subtourpath}
\end{figure}
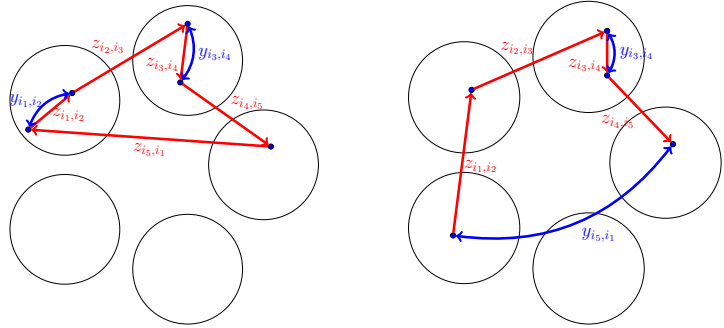
 \section{Primal Heuristics}
\label{sec:heuristics}

The heuristics presented in this section exploit the fact that an integer solution for
(RLT) and (CC) is determined completely by the assignment of $x$-variables.
We present three start heuristics and one improvement heuristic.

\ourparagraph{Greedy}
This heuristic constructs a feasible solution by iteratively assigning vertices to clusters in a greedy fashion.
First, one vertex is assigned to each cluster, such that \eqref{eq:setcover} is satisfied.
Second, we repeatedly compute for all unassigned vertices and each of the $m$~clusters
the updated objective value (in $\mathcal{O}(n)$~time), and choose the
vertex-cluster assignment with largest improvement.
In our implementation, the algorithm is called once as a start heuristic.
It runs in $\mathcal{O}(n^2m)$~time and always generates a feasible solution since it ensures that each cluster contains at least one vertex and that all vertices are assigned.

\ourparagraph{Sparsify}
For this sub-MIP heuristic, we remove 97\% of the edges with smallest weight
$q_{i,j}+q_{j,i}$ and call the branch-and-cut solver on the reduced instance
with a node limit of one, i.e., we only process the root node in order to limit its effort.

\ourparagraph{Rounding}
This heuristic assumes a fractional LP solution and rounds the $x$-variables.
For each vertex~$i\in \Bins$, it selects a cluster $s^*\in\Cluster$ with largest $x_{i,s^*}$
and fixes it to one; all other $x_{i,s}$, $s\not=s^*$, are fixed to zero.
If $\arg\max_{s^*\in\Cluster} x_{i,s^*}$ is not unique, we choose the smallest cluster.
For each vertex, $m$~clusters have to be considered, hence the heuristic runs in
$\mathcal{O}(nm)$~time.
If no cluster remains empty, the heuristic terminates with a feasible solution.

\ourparagraph{Exchange}
This heuristic assumes an integer feasible solution and applies a series of exchange steps, where each time one vertex is transferred to a different cluster.
It is inspired by the famous Lin-Kernighan graph partitioning
heuristic~\cite{KernighanLin1970}.
Each vertex is reassigned once.
We repeatedly compute for all unprocessed vertices and each of the $m-1$~alternative
clusters the updated objective value (in $\mathcal{O}(n)$~time), and choose the
swap with largest improvement, even if it is negative.
We track the solution quality and return the clustering with best objective value.

The algorithm runs in $\mathcal{O}(n^3m)$~time and is applied to each new
incumbent.
It can be executed repeatedly, with the solution of the previous run as the
input, until no more improvement is found.
In this case, it can be useful to continue from a significantly different solution.
To this end, half of the vertices are selected randomly from each cluster and moved to the next cluster.
In our implementation, we apply at most 5~perturbations.
 \section{Computational Experiments}
\label{sec:experiments}

\newcommand{\rlt}{RLT\xspace}
\newcommand{\rltsep}{RLT+sepa\xspace}
\newcommand{\ccmip}{CC\xspace}
\newcommand{\ccmipsep}{CC+sepa\xspace}

\def \timerlt         {1184.1} 
\def \timerltsep      {1377.9} 
\def \timeccmip       {1550.9} 
\def \timeccmipsep    {755.3} 
\def \nodesrlt        {182.0} 
\def \nodesrltsep     {35.3} 
\def \nodesccmip      {676.4} 
\def \nodesccmipsep   {125.0} 
\def \gaprlt          {72.3} 
\def \gaprltsep       {69.1} 
\def \gapccmip        {35.9} 
\def \gapccmipsep     {25.2} 
\def \intprimrlt      {2540.7}
\def \intprimrltsep   {2552.5}
\def \intprimccmip    {1161.7}
\def \intprimccmipsep {1077.5}
\def \intdualrlt      {47568.0}
\def \intdualrltsep   {39205.7}
\def \intdualccmip    {81561.4}
\def \intdualccmipsep {34755.5}

\def \timedef    {1592.6}
\def \timepart   {1372.0}
\def \timesubt   {949.9}
\def \timetri    {1147.4}
\def \timesepa   {876.0}
\def \nodesdef   {1240.6}
\def \nodespart  {593.4}
\def \nodessubt  {408.1}
\def \nodestri   {639.0}
\def \nodessepa  {227.0}

\def \onetimerlt         {221.8}
\def \onetimerltsep      { 299.0}
\def \onetimeccmip       {376.8}
\def \onetimeccmipsep    { 89.0}
\def \onenodesrlt        {206.1}
\def \onenodesrltsep     {45.6}
\def \onenodesccmip      {1425.2}
\def \onenodesccmipsep   {137.6}
\def \onegaprlt          {3.2}
\def \onegaprltsep       {2.1}
\def \onegapccmip        {5.3 }
\def \onegapccmipsep     {0.7}
\def \oneintprimrlt      {622.9}
\def \oneintprimrltsep   {692.4}
\def \oneintprimccmip    {245.2}
\def \oneintprimccmipsep {221.4}
\def \oneintdualrlt      {6305.5}
\def \oneintdualrltsep   {4037.6}
\def \oneintdualccmip    {10445.9}
\def \oneintdualccmipsep {1311.3}
 
We implemented both formulations (RLT) and (CC) within the branch-and-cut framework SCIP~\cite{GleixnerEtal2018} and added the separation routines and primal heuristics described above.
Except for the problem-specific changes, we used default settings for all test runs.
The time limit was set to $7200$~seconds, with only one job running per node at a time on a compute cluster of 2.7 GHz Intel Xeon E5-2670 v2 CPUs with 64 GB main memory each.

We consider $65$~instances over graphs with $20$ to $250$~vertices that were created from
four different types of simulations of non-reversible Markov state models such as to
exhibit between $3$ and $15$~clusters.
In the objective function, net flow and coherence are weighted $1:0.001$, i.e., $\alpha =
1/1.001$.
The first $40$~instances stem from a model of artificial catalytic cycles,
simulated using a hybrid Monte-Carlo method~\cite{brooks2011handbook} as described
in~\cite{WitzigEtal2018}, with $3$, $4$, and $6$~minima;
for each of those, transition matrices over graphs with $20$, $30$, $50$, and
$100$~vertices were created.
For $12$~instances we used repressilator simulations~\cite{repress}, a
very prominent example of a synthetic genetic regulatory network;
these instances feature $40, 80$, and $200$~vertices and between $3$ and
$6$~clusters.
Another $8$~instances were created from simulations of the Hindmarsh-Rose
model~\cite{hindrose}, which is used to study neuronal activity in the human heart;
these instances feature $50$ and $250$~vertices with $3$, $5$, $7$, and $9$~clusters each.
The final set of $5$~instances stems from the dynamics of an
Amyloid-\textbeta\ peptide~\cite{ReuterWeberFackeldeyetal.2018}
over $220$ vertices, with $3$ to $15$~clusters.

We first compare the performance of (RLT) and (CC), both with and without problem-specific separation.
In all cases, the problem-specific heuristics were enabled, since they apply equally to both models.
Table~\ref{tbl:formulations} reports aggregated results, using shifted geometric means as
defined in~\cite{Achterberg2007} with a shift of $10$~seconds for time, $100$~nodes for
the size of the search tree, and $1000$ for the primal and dual integrals.
The primal integral~\cite{Berthold2013} is a useful metric to
quantify how quickly good primal solutions are found and improved during the course of the
solving process.
The dual integral is defined analogously and can give give insight into how quickly good
dual bounds are established.
It is computed using the difference between the current dual bound and the best known primal solution.
The gap at primal bound $p$ and dual bound $d$ is computed as $(d - p) / \min\{ p +
\epsilon, d + \epsilon\}$ for $\epsilon=10^{-6}$ and can exceed 100\%~\cite{Achterberg2007}.

\begin{table}[t]
\centering\small
\caption{Performance of formulations \ccmip and \rlt with and without problem-specific
  separation: number of solved instances, shifted geometric means of solving time, number
  of branch-and-bound nodes, primal and dual integral, and arithmetic mean of final
  gap. Subset ``1-solved'' contains all instances that could be solved to optimality by at
  least one setting.
Problem-specific heuristics are enabled, hence the lines ``\ccmip'' and ``\ccmipsep''
  over all instances match the last two lines in Tab.~\ref{tbl:sepaheur}.}
\label{tbl:formulations}
\begin{tabular*}{\textwidth}{l@{\extracolsep{\fill}}lrrrrrr}
\toprule
Instances & setting & solved & time [s]       & nodes     &  gap [\%] & primal int. &    dual int. \\
\midrule
All (65)
&\rlt        &       29 &     \timerlt        &  \nodesrlt     & \gaprlt     & \intprimrlt      &   \intdualrlt \\
&\rltsep     &       28 &     \timerltsep     &  \nodesrltsep  & \gaprltsep  & \intprimrltsep   &   \intdualrltsep \\
&\ccmip      &       24 &     \timeccmip      &  \nodesccmip   & \gapccmip   & \intprimccmip    &   \intdualccmip \\
&\ccmipsep   &       31 &     \timeccmipsep   &  \nodesccmipsep& \gapccmipsep& \intprimccmipsep &   \intdualccmipsep \\
\midrule
1-solved (34)
& \rlt       &       29 &     \onetimerlt     &  \onenodesrlt     & \onegaprlt     & \oneintprimrlt      &   \oneintdualrlt \\
& \rltsep    &       28 &     \onetimerltsep  &  \onenodesrltsep  & \onegaprltsep  & \oneintprimrltsep   &   \oneintdualrltsep \\
& \ccmip     &       24 &     \onetimeccmip   &  \onenodesccmip   & \onegapccmip   & \oneintprimccmip    &   \oneintdualccmip \\
& \ccmipsep  &       31 &     \onetimeccmipsep&  \onenodesccmipsep& \onegapccmipsep& \oneintprimccmipsep &   \oneintdualccmipsep \\
\bottomrule
\end{tabular*}
\end{table}
 
While plain (CC) solves less instances than both variants of (RLT), we observe that (CC)
with separation clearly outperforms all other settings overall.
(CC) with separation solves $7$~more instances than plain (CC) and reduces solving time by
\percentage{\timeccmipsep}{\timeccmip} and the number of nodes by
\percentage{\nodesccmipsep}{\nodesccmip}.
By contrast, for (RLT) we observe that separation negatively impacts overall performance.
While it reduces the number of nodes significantly by a similar factor than in (CC), the
addition of cutting planes slows down the speed of LP solving and overall node processing
time.
This may be explained by the fact that (RLT) contains many more variables than
constraints.
The above analysis is confirmed on the subset ``1-solved'' of instances, where all
instances not solved by any setting are excluded.

\begin{table}[t]
\centering\small
\caption{Performance impact of separation routines and primal heuristics over all
  65~instances with formulation \ccmip: on the number of solved instances, on the shifted
  geometric means of solving time, number of branch-and-bound nodes, primal and dual
  integral, and on the arithmetic mean of the final gap.}
\label{tbl:sepaheur}
\begin{tabular*}{\textwidth}{l@{\extracolsep{\fill}}rrrrrr}
\toprule
Setting
& solved & time [s] & nodes &    gap [\%] & primal int. & dual int. \\
\midrule
Default        &       27 &     \timedef  & \nodesdef   & 7439.0 & 62250.3 & 76759.2 \\
\quad +triangle only      &       29 &     \timetri  & \nodestri   & 7231.0 & 42281.0 & 48884.2 \\
\quad +partition only     &       27 &     \timepart & \nodespart  & 7652.5 & 58147.5 & 60835.6 \\
\quad +subtour only       &       30 &     \timesubt & \nodessubt   & 4906.8 & 45601.8 & 45380.6 \\
\quad +all sepas     &       31 &     \timesepa & \nodessepa  & 5050.1  & 40796.6 & 38811.6 \\
\midrule
Default        &       27 &     \timedef  & \nodesdef   & 7439.0 & 62250.3 & 76759.2 \\
\quad +greedy only        &       25 &     1741.3 &         1084.3 & 1356.1  &   49072.8 &        72946.5 \\
\quad +sparsify only       &       23 &     1763.2 &         1109.6 &  113.5  &   47536.5 &        75186.5 \\
\quad +rounding only      &       27 &     1376.4 &          937.4 & 1724.3  &   34584.9 &        50266.7 \\
\quad +exchange only      &       28 &     1305.2 &          684.4 &   35.0  &   1252.5  &        73321.3 \\
\quad +all heurs     &       24 &     1550.9 &          676.4 &   35.9  &   1161.7  &        81561.4 \\
\midrule
All sepas+all heurs &  31 &     \timeccmipsep& \nodesccmipsep& \gapccmipsep& \intprimccmipsep &   \intdualccmipsep \\
\bottomrule
\end{tabular*}
\end{table}
 
In order to quantify the performance impact of individual solving techniques, we continued
to use (CC) as a baseline with all cutting planes and primal heuristics deactivated.
In Tab.~\ref{tbl:sepaheur} we report the performance when a single separator is enabled,
when all separators are enabled simultaneously, and similarly for the four primal
heuristics.
Here we can observe that each separation routine individually improves performance.
Subtour separation seems to be the most useful single separator, reducing the solving time
by \percentage{\timesubt}{\timedef} and the number of nodes by
\percentage{\nodessubt}{\nodesdef}.
All separators combined deliver the best results, with $4$ more instances solved, a
speedup of \percentage{\timesepa}{\timedef}, and a reduction in the number of nodes by
\percentage{\nodessepa}{\nodesdef}.
However, the final gap still remains large because on hard instances that time out, good
primal solutions cannot be found by the LP solutions alone.

Among the primal heuristics, the exchange heuristic proves to be the most important single
technique.
While also the other heuristics all reduce the primal integral and the final gap, only the
exchange heuristic increases the number of solved instances (by one instance) and yields
the smallest final gap of 35.0\%.
Still, applying all heuristics together gives the best results.
The main benefit of the greedy and sparsify heuristics is to run once in order to provide
the exchange heuristic with starting solutions to improve upon.
The best performance in all metrics is obtained when all separators and heuristics are
active.

To conclude, we observed the best out-of-the-box performance using an RLT formulation
including improvements from~\cite{Liberti2007,Mallach2018}.
However, this formulation did not profit from problem-specific separation due to an
increased cost in solving the LP relaxations.
In turn, our problem-specific formulation responded very well to the separation routines,
and we observed that all solving techniques complement each other in improving
performance.
Still, only \percentage{31}{65} of the instances could be solved to proven optimality, which
underlines the importance of the primal heuristics when applying cycle clustering to
real-world data.
In this respect, we found that repeated application of a Lin-Kernighan style exchange
heuristic with random perturbations was highly successful in improving initial solutions
provided by simple construction heuristics.

 \begin{credits}
\subsubsection{\ackname}
We wish to thank Konstantin Fackeldey, Andreas Grever, and Marcus Weber for supplying us
with simulation data for our experiments.
This work has been supported
by the Research Campus MODAL
funded
by the German Federal Ministry of Education and Research (BMBF
grants 05M14ZAM, 05M20ZBM).
\subsubsection{\discintname}
The authors have no competing interests to declare that are relevant
to the content of this article.
\end{credits}

\clearpage
\bibliographystyle{splncs04}
\bibliography{Bibliography}
\end{document}